\newtheorem{theorem}{Theorem}[section]
\newtheorem{lemma}[theorem]{Lemma}
\theoremstyle{definition}
\newtheorem{corollary}{Corollary}[section]
\newtheorem{proposition}{Proposition}[section]
\theoremstyle{remark}
\numberwithin{equation}{section}
\newcommand{\bbr}{\mathbb{R}}
\newcommand{\ve}{\varepsilon}
\newcommand{\N}{{\mathbb N}}
\newcommand{\R}{{\mathbb R}}
\newcommand{\eps}{{\varepsilon}}
\begin{document}

\title[Brezis Browder results and $s$-harmonic fuctions]{Elementary Brezis--Browder type results and Representation formulae for $s$--harmonic functions}

\author{Damiano Greco}
\address{Damiano Greco\\School of Mathematics\\
The University of Edinburgh\\
and The Maxwell Institute for the Mathematical Sciences\\
James Clerk Maxwell Building\\
The King's Buildings\\
Peter Guthrie Tait Road\\
Edinburgh\\ 
EH9 3FD\\
 United Kingdom}
\email{dgreco@ed.ac.uk}

\subjclass[2020]{Primary 35R11, 35C15; Secondary 42B37}

\date{}

\begin{abstract}
\noindent We prove Brezis--Browder type results for fractional Sobolev spaces and quantitative type estimates for $s$--harmonic functions.
Furthermore, we give sufficient conditions 
for distributional solutions to the fractional Poisson's equation $(-\Delta)^su=T$ on $\R^d$ to be of the form
$$u(x)=\int_{\R^d}\frac{T(y)}{|x-y|^{d-2s}}dy+l,\quad l\in \R.$$
\end{abstract}

\maketitle

\section{Introduction}
In this paper we first focus on the study of $s$-harmonic functions on $d$-dimensional balls, namely on {distributional solutions} to the equation\footnote{See eq. \eqref{11_lap} for the notion of distributional solution.} $(-\Delta)^{s}u=0$ in $B(0,R)$, where 
 $R>0$. Historically, in the case of the standard Laplace operator $-\Delta$  (corresponding to $s=1$), it is well known \cite{hypo} that any distributional solution to $-\Delta u=0$ in $B(0,R)$ is smooth; see also \cite{garofalo}*{Section 12} and reference therein for broader introduction to the topic. 
 The same result holds for general $s$, as well as for a wider class of pseudodifferential operators; see \cite{garofalo}*{Theorem 12.19}. In Section \ref{sec3} of the present paper, we carry out such analysis and obtain quantitative estimates for $s$--harmonic functions on balls.  For $0<s<1$ similar outcomes were proved in \cites{BBK2,entire}. By following an argument developed in \cite{orbital}, we  extend some of them to the range $d\ge 1$ and  $0<s<\tfrac{d}{2}$. 
We refer the reader to Lemma \ref{lemma_1} for the precise statement. These estimates, apart from their intrinsic interest, admit various applications. 
The first one is that $s$--harmonic functions are polynomials. Note that, for a generic $s>0$, control on the behavior of a function $u$ at infinity is required in order to define $(-\Delta)^{s}u$ in the distributional sense. This is due to the \textit{non-local} nature of $(-\Delta)^s$, whose action on smooth functions with compact support has not (in general) compact support but only polynomial decay, see e.g., \cite{mazya}*{Lemma 1.2} or \cite{Abatangelo_1}*{Lemma B.5}. For this reason, it is classical to introduce the space $\mathscr{L}^{1}_{2s}(\R^d)$ defined by 
\begin{equation}\label{L_pesato}
\mathscr{L}^{1}_{2s}(\bbr^d):=\left\{u\in L^1_{loc}(\bbr^d):\int_{\bbr^d}\frac{|u(x)|}{(1+|x|)^{d+2s}}dx<\infty\right\},
	\end{equation}
see also \cite{BBK}*{Section 3}.
As a result, we formulate the following:
\begin{theorem}\label{poli}
	Let $d\ge 1$ and $0<s<\frac{d}{2}$. If  $u\in \mathscr{L}^{1}_{2s}(\R^d)$ solves 
	\begin{equation*}\label{eq_001}
		(-\Delta)^{s}u=0\quad \text{in}\ \mathscr{D}'(\R^d),
	\end{equation*}
	then $u$ is a polynomial of degree strictly smaller than $2s$.
\end{theorem}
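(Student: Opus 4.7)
The plan is to combine the quantitative interior regularity estimate in Lemma~\ref{lemma_1} with a rescaling argument to kill all sufficiently high derivatives of $u$, and then to appeal a second time to the membership $u\in\mathscr{L}^{1}_{2s}(\R^d)$ in order to upgrade the resulting polynomial degree bound to the strict inequality $\deg u < 2s$. I would first fix an arbitrary point $x_0\in\R^d$ and a radius $R>0$, and consider the rescaled function $v(z):=u(x_0+Rz)$. A direct change of variables gives $(-\Delta)^s v=0$ in $\mathscr{D}'(\R^d)$ together with
\begin{equation*}
\|v\|_{\mathscr{L}^{1}_{2s}(\R^d)} \;=\; R^{2s}\int_{\R^d}\frac{|u(y)|}{(R+|y-x_0|)^{d+2s}}\,dy.
\end{equation*}
Applying Lemma~\ref{lemma_1} to $v$ on $B_1(0)$ (which I expect to provide, for every multi-index $\alpha$, a pointwise bound of the form $\|\partial^\alpha v\|_{L^\infty(B_{1/2}(0))}\le C_\alpha\|v\|_{\mathscr{L}^{1}_{2s}(\R^d)}$), and unscaling via $\partial^\alpha v(0)=R^{|\alpha|}\partial^\alpha u(x_0)$, I obtain
\begin{equation*}
|\partial^\alpha u(x_0)| \;\le\; C_\alpha\, R^{2s-|\alpha|}\int_{\R^d}\frac{|u(y)|}{(R+|y-x_0|)^{d+2s}}\,dy.
\end{equation*}

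Next I would let $R\to\infty$. For $R\ge 1$ the integrand is dominated by $|u(y)|(1+|y-x_0|)^{-(d+2s)}$, which is integrable because $u\in\mathscr{L}^{1}_{2s}(\R^d)$ and $(1+|y-x_0|)\asymp (1+|y|)$ for $|y|$ large. Hence the integral above stays uniformly bounded in $R\ge 1$, while the prefactor $R^{2s-|\alpha|}$ tends to $0$ whenever $|\alpha|>2s$. It follows that $\partial^\alpha u(x_0)=0$ for every $x_0\in\R^d$ and every multi-index with $|\alpha|>2s$, so $u$ coincides with a polynomial. Since $|\alpha|>2s$ covers all integers $|\alpha|\ge\lceil 2s\rceil$ when $2s\notin\Z$ and all $|\alpha|\ge 2s+1$ when $2s\in\Z$, at this stage one has $\deg u\le \lfloor 2s\rfloor$, which already gives the strict inequality $\deg u < 2s$ in the non-integer case.

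To cover the integer case (and in general to sharpen the bound), I would use the integrability assumption a second time. Writing $u$ as a polynomial of degree $k$ with leading homogeneous part $P_k\not\equiv 0$, pick $\omega_0\in\mathbb{S}^{d-1}$ with $P_k(\omega_0)\ne 0$; by continuity there is a small cone around $\omega_0$ on which $|u(y)|\gtrsim |y|^k$ for $|y|$ large. A polar integration then shows that $\|u\|_{\mathscr{L}^{1}_{2s}(\R^d)}=+\infty$ as soon as $k\ge 2s$, contradicting the hypothesis; hence $k<2s$, as required.

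The technical heart of the argument is therefore the first step, and the main obstacle is to arrange that Lemma~\ref{lemma_1} is available at \emph{every} order $|\alpha|$, with a constant whose behaviour under the dilation $u(\cdot)\mapsto u(x_0+R\cdot)$ is explicit enough to be tracked in the limit $R\to\infty$. A purely qualitative ``$u$ is smooth'' conclusion would not suffice; what is used is precisely that the right-hand side of the interior estimate is a weighted $L^1$ tail whose $R$-dependence is quantitative. Once that estimate is in hand the Liouville-type conclusion is a clean scaling argument, and the sharp degree bound follows from the $\mathscr{L}^{1}_{2s}$-growth obstruction above.
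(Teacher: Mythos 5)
Your proof is correct and follows essentially the same route as the paper: both rest entirely on the quantitative estimate of Lemma~\ref{lemma_1} and on letting $R\to\infty$ (your rescaling to the unit ball merely reproduces that estimate with its explicit $R$-dependence). The only divergence is the borderline case $|\alpha|=2s$ when $2s\in\Z$, which the paper disposes of in the same limit because the weighted integral $\int_{\R^d}|u(x)|\,(R^{d+2s}+|x|^{d+2s})^{-1}dx$ itself tends to $0$ by dominated convergence, whereas you add a separate (also valid) growth argument showing that a polynomial of degree at least $2s$ cannot belong to $\mathscr{L}^{1}_{2s}(\R^d)$.
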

\noindent
Note that, for $d\ge 2$,  Theorem \ref{poli} extends the validity of \cite{entire}*{Theorem 1.1}  from $s\in (0,1)$ to  $s\in (0,\tfrac{d}{2})$.

 Next, we focus on the (fractional) Poisson's equation 
\begin{align}
(-\Delta)^{s}u=f\quad  \text{in}\ \R^d,
\label{frac_intro}
\end{align}
under suitable assumptions on $f$. For example, if $f\in L^{p}(\R^d)$ with $1<p<\tfrac{d}{2s}$, it is well known that a solution to the Poisson's equation is given by the \textit{Riesz potential} of $f$: 
 \begin{align}\label{RZ}
 (I_{2s}*f)(x)=A_{2s}\int_{\R^d}\frac{f(y)}{|x-y|^{d-2s}}dy, 
 \end{align}
where by $I_{\alpha}(x):=A_{\alpha}|x|^{\alpha-d}$ we denote the Riesz kernel with $\alpha\in (0,d)$, and $*$ stands for the standard convolution in $\R^d$. The choice of the normalization constant $A_{\alpha}{:=\!\frac{\Gamma((d-\alpha)/2)}{\pi^{d/2}2^{\alpha}\Gamma(\alpha/2)}}$
ensures the validity of the semigroup property $I_{\alpha+\beta}=I_\alpha*I_\beta$ for all $\alpha,\beta\in(0,d)$ such that $\alpha+\beta<d$, see \cite{DuPlessis}*{pp.\thinspace{}73--74}. 
 In view of Theorem \ref{poli},  we derive that  $I_{2s}*f$ in \eqref{RZ} is the unique (within a suitable class of functions) distributional solution  of \eqref{frac_intro}, see e.g.,  Remark \ref{RZ_ext}.
This result can be seen as the higher order version of \cite{entire}*{Corollary 1.4}.
Furthermore, in Theorem \ref{lemmamit} we deduce a representation formula for distributional solutions to the fractional Poisson's equation in the spirit of \cite{mitidieri}*{Theorem 2.4} where a similar result has been obtained in the case of the polyharmonic operator $(-\Delta)^{k}$, $k$ being an integer number strictly larger than one. See also Remark \ref{remarknew} for similar outcomes.

 In order to clearly state Theorem \ref{lemmamit} below, we first recall that by $\dot H^{s}(\R^d)$ we denote the homogeneous $L^{2}$-based Sobolev space (see \eqref{norm_intro} with $p=2$) while $\dot H^{-s}(\R^d)$ denotes its dual.
\begin{theorem}\label{lemmamit}
	Let $d\ge 1$,  $0<s<\tfrac{d}{2}$, $0 \le T\in \dot{H}^{-s}(\R^d)\cap L^{1}_{loc}(\R^d)$ and $l\in \R$. The following are equivalent:
	\begin{itemize}
		\item[$(i)$] $u\in \mathscr{L}^{1}_{2s}(\R^d)$ satisfies 
		\begin{equation*}\label{eqT}
			(-\Delta)^su=T\quad \mathscr{D}'(\R^d)
		\end{equation*}
		and
		\begin{equation}\label{eq2228}
			\liminf_{R\to \infty}\frac{1}{R^d}\int_{R<|x-y|<2R} |u(y)-l |dy<\infty\quad for\ a.e.\ x\in \R^d;
		\end{equation} 
		\item[$(ii)$] $u$ can be written as
		$$u(x)=A_{2s}\int_{\R^d}\frac{T(y)}{|x-y|^{d-2s}}dy+l\quad for\ a.e.\ x\in \R^d.$$
	\end{itemize}
\end{theorem}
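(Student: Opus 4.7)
The plan is a standard difference argument based on the Liouville-type statement of Theorem \ref{poli}, using the Riesz potential $U(x) := A_{2s}\int_{\R^d}|x-y|^{2s-d}T(y)\,dy = A_{2s}(I_{2s} * T)(x)$ as a reference solution. Note that $U$ is pointwise well-defined (possibly infinite) because $T\geq 0$.

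I first record the regularity of $U$. Since $T \in \mathring{H}^{-s}(\R^d)$, Corollary \ref{riesz_sobolev} gives $U \in \mathring{H}^{s}(\R^d)$ with $(-\Delta)^s U = T$ in $\mathscr{D}'(\R^d)$. The fractional Sobolev embedding $\mathring{H}^s(\R^d) \hookrightarrow L^{2d/(d-2s)}(\R^d)$, together with H\"older's inequality applied with conjugate exponents $\tfrac{2d}{d-2s}$ and $\tfrac{2d}{d+2s}$, shows that $U \in \mathscr{L}^1_{2s}(\R^d)$ and also yields the decay estimate
\[
\frac{1}{R^d}\int_{R<|x-y|<2R}U(y)\,dy \;\leq\; C\,\|U\|_{L^{2d/(d-2s)}(\R^d)}\,R^{-(d-2s)/2}\;\xrightarrow[R\to\infty]{}\;0,
\]
uniformly in $x\in\R^d$.

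For the implication $(ii)\Rightarrow(i)$: with $u = U + l$, both summands belong to $\mathscr{L}^1_{2s}(\R^d)$ and constants are distributionally annihilated by $(-\Delta)^s$, so $u \in \mathscr{L}^1_{2s}(\R^d)$ satisfies $(-\Delta)^s u = T$; since $|u-l|=U$, the decay estimate above gives \eqref{eq2228} with liminf equal to $0$. For the implication $(i)\Rightarrow(ii)$: set $v := u - l - U$, so that $v \in \mathscr{L}^1_{2s}(\R^d)$ and $(-\Delta)^s v = T-T = 0$ in $\mathscr{D}'(\R^d)$; by Theorem \ref{poli}, $v$ is a polynomial of degree strictly smaller than $2s$. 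The triangle inequality $|v|\leq|u-l|+U$, combined with \eqref{eq2228} and the decay estimate for $U$, yields
\[
\liminf_{R\to\infty}\frac{1}{R^d}\int_{R<|x-y|<2R}|v(y)|\,dy < \infty \quad \text{for a.e. } x\in\R^d.
\]
For a nonzero polynomial of degree $k\geq 1$ the same averaged integral grows like $R^k$ as $R\to\infty$, which is a contradiction. Hence $v$ is a constant, and after absorbing it into the additive term we recover the representation in (ii).

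The main delicate point is exactly this polynomial-to-constant reduction: in the low-$s$ regime $s<1/2$ it is automatic because the admissible degrees collapse to $\{0\}$, but for $s\geq 1/2$ it genuinely depends on the growth hypothesis \eqref{eq2228} to kill all positive-degree polynomial components. A secondary technical concern is ensuring that each manipulation involving $U$ makes sense at the a.e.\ pointwise level and that the distributional identity $(-\Delta)^s U = T$ truly holds; both are supplied by the preliminary regularity obtained from Corollary \ref{riesz_sobolev}.
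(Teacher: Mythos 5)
Your overall strategy---write $u = A_{2s}(I_{2s}*T) + P$ via the Liouville result (Theorem~\ref{poli}), then discard $P$ using the averaged growth condition \eqref{eq2228} and the decay of the Riesz potential---is exactly the paper's. However, there is a concrete gap in how you justify that $U := A_{2s}(I_{2s}*T)$ belongs to $\mathring{H}^s(\R^d)$ and solves $(-\Delta)^s U = T$ in $\mathscr{D}'(\R^d)$: you invoke Corollary~\ref{riesz_sobolev}, which requires $T \in L^q(\R^d)$ for some $1 < q < \tfrac{d}{2s}$. Under the hypotheses of Theorem~\ref{lemmamit} you only have $T \ge 0$ and $T \in \mathring{H}^{-s}(\R^d)\cap L^1_{loc}(\R^d)$, so that corollary is not applicable. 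The paper instead combines \cite{mazya}*{Lemma 1.1} (which, as recalled in Remark~\ref{no_s_restri}, yields $\mathcal{D}_{2s}(T,T)<\infty$ precisely for nonnegative $T\in \mathring{H}^{-s}(\R^d)\cap L^1_{loc}(\R^d)$) with Lemma~\ref{finite_generator} to get $U_T = I_{2s}*T\in \mathring{H}^s(\R^d)$ and $(-\Delta)^s(I_{2s}*T)=T$ in $\mathscr{D}'(\R^d)$; that is the chain of citations your argument needs. Once this is in place, your H\"older/Sobolev decay estimate and the rest of the difference argument go through unchanged.

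A secondary point: your concluding sentence that $v$ is a constant ``and after absorbing it into the additive term we recover (ii)'' does not quite close the proof, because (ii) fixes the additive constant to be the given $l$, so one actually needs $v\equiv 0$. The growth hypothesis \eqref{eq2228} only rules out polynomial components of degree $\geq 1$; it does not exclude a nonzero constant offset, for which the averaged integral stays bounded. The paper's own proof is equally terse at this step (it jumps from $P\not\equiv l$ to $\deg P \geq 1$), so this is not a defect peculiar to your writeup, but you should be aware that the stated equivalence is delicate exactly here.
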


In the second part of this paper, we move our attention to Brezis--Browder type results for fractional Sobolev spaces. If  $d$ and $m$ are integers larger than one, $p$ is a real number strictly larger than one, $p':=\frac{p}{p-1}$ represents the conjugate of $p$, and $W^{m,p}(\R^d)$ denotes the standard Sobolev space,  H. Brezis and F. Browder in \cite{1982} proved that if $T\in W^{-m,p'}(\R^d)\cap L^1_{loc}(\R^d)$ and $u\in W^{m,p}(\R^d)$ are such that $T(x)u(x)\ge -|f(x)|$ a.e. for some integrable function $f$ then $Tu\in L^1(\R^d)$ and 
\begin{align}
\label{inter_B}
\langle T,u \rangle_{W^{-m,p'}(\R^d),\, W^{m,p}(\R^d)}=\int_{\R^d}T(x)u(x)dx.
\end{align}
The first order case $m=1$ was already studied in  \cite{cartan} where the same authors provide also counterexamples; see also \cite{1984} for similar outcomes.
The proof of the above results heavily relies on a truncation procedure introduced by L. Hedberg (see \cite{Adams}*{Theorem 3.4.1} and \cite{Hedbger})  stating that for every $u\in W^{m,p}(\R^d)$ there exists a sequence $(u_n)_n$  such that 
$$ \begin{cases} u_n\in W^{m,p}(\R^d)\cap L^{\infty}(\R^d), & \,\,\text{supp}(u_n)\, \mbox{is compact}; \\ |u_n(x)|\le |u(x)|\ \mbox{and}\ u_n(x)u(x)\ge 0 & \mbox{ a.e. in}\ \R^d; \\
	u_n\to u\ \mbox{in}\ W^{m,p}(\R^d).& 
\end{cases}
$$

\noindent
Such result has been extended by considering $\Omega$ a  generic open subset of $\R^d$, $T=~{\mu+h}$ for some Radon measure $\mu$ and  $h\in L^1_{loc}(\Omega)$. Namely, in \cite{1990} the authors proved that if $u(x)\ge 0$ and 
$h(x)u(x)\ge -|f(x)|$ a.e. in $\Omega$ for some $f\in L^1(\Omega)$ then $u\in L^{1}(\Omega, d\mu)$, $hu\in L^1(\Omega)$ and 
$$\langle T,u \rangle_{W^{-m,p'}(\Omega),\, W^{m,p}_0(\Omega)}=\int_{\Omega}u(x)d\mu(x)+\int_{\Omega}h(x)u(x)dx.$$
We further mention that  the case $m=1$, $h=0$ and $\Omega$ an open subset of $\R^d$ was studied in \cite{cartan} and, already  in \cite{1982},  the case $m>1$ was analyzed under some restriction on $T$, see \cite{1982}*{Theorem 3}. 

In the sequel, we will often refer to ``validity of Brezis--Browder results" when an equality of the type \eqref{inter_B} holds.  In particular,  Section \ref{sec4} focuses on the fractional counterpart of the above mentioned results. For the remaining part of the introduction, we present our outcome in the homogeneous setting only; see Theorem \ref{integr_alpha_big} for a complete statement.  

Let $d\ge 1$, $s>0$ and $1<p<\frac{d}{s}$. Let us consider the homogeneous Sobolev spaces $\dot{H}^{s,p}(\R^d)$ defined as the completion of $C^{\infty}_c(\R^d)$ with respect to the norm
\begin{align}
\|u\|_{\dot H^{s,p}(\R^d)}:=\Big|\!\Big|\mathscr{F}^{-1}\Big({|\cdot|}^{s}\mathscr{F}(u)\Big)\Big|\!\Big|_{L^p(\R^d)}, 
\label{norm_intro}
\end{align}
where $\mathscr{F}$ denotes the Fourier transform operator. 
In order to deal with  $s\notin \N$,  the strategy is to approximate any function in $\dot{H}^{s,p}(\R^d)$ by multiplication against a suitable family of smooth functions. However, such an approximating sequence is not in $L^{\infty}(\R^d)$ as in the Hedberg case. Nevertheless, it becomes elementary to overcome this issue by further requiring higher  local integrability of the element $T$ generating a linear functional on $\dot{H}^{s,p}(\R^d)$ (i.e., on $T\in \dot H^{-s,p'}(\R^d)$). 
Namely, for any $\eps>0$, if we require $T\in \dot{H}^{-s,p'}(\R^d)\cap L^{\bar{q}}_{loc}(\R^d)$ where $\bar{q}=\bar{q}(d,s,p)$ is defined by 
\begin{equation}\label{q_}
	\bar{q}(d,s,p):=\footnote{Note that, unlike the space $\dot H^{s,p}(\R^d)$, $\bar{q}$ is defined also for $p\ge \frac{d}{s}$. The reason we included such cases is because the same exponent will play a role in the inhomogeneous setting, which is in fact well defined for $p\ge \frac{d}{s}$.} \begin{cases} 1, & \mbox{if }s\in \N\ \text{or}\ p>\frac{d}{s}, \\ 1+\eps, & \mbox{if } s\notin \N\ \text{and}\ p=\frac{d}{s},\\
	\frac{dp}{d(p-1)+sp}, & \mbox{if } s\notin \N\ \text{and}\ 1<p<\frac{d}{s},
	\end{cases}
\end{equation}
we prove the following:
\begin{theorem}\label{integr_alpha_intro}
	Let  $d\ge 1$, $s>0$, $1<p<\frac{d}{s}$ and $\bar{q}$ as in \eqref{q_}. Assume that $T\in \dot{H}^{-s,p'}(\bbr^d)\cap L_{loc}^{\bar{q}}(\bbr^d)$ and $u~{\in \dot{H}^{s,p}(\bbr^d)}$ are such that $Tu\ge -|f|$  for some  $f\in L^{1}(\bbr^d)$. Then, $Tu\in L^{1}(\bbr^d)$ and 
	$$\langle T,u \rangle_{\dot{H}^{-s,p'}(\R^d), \,\dot{H}^{s,p}(\R^d)}=\int_{\bbr^d}T(x)u(x)dx.$$
\end{theorem}

\section{Preliminaries}\label{sec2}
\subsection*{Fractional Laplacian and Sobolev spaces}
\noindent Let $d\ge 1$ and $s>0$. The fractional Laplacian $(-\Delta)^{s}$  on $\mathbb{R}^d$ of a function in $C^{\infty}_c(\R^d)$ is defined by means of the Fourier transform
\begin{equation}\label{laplac_fourier}
	(-\Delta)^{s}u:=\mathscr{F}^{-1}\left({|\cdot|}^{2s}\mathscr{F}(u)\right).
\end{equation}
Thus, in view of \eqref{norm_intro} and \eqref{laplac_fourier},  the homogeneous fractional Sobolev space $\dot{H}^{s,p}(\bbr^d)$ can be defined as the completion of $C^{\infty}_c(\R^d)$ with respect to the norm
\begin{equation*}
	\|u\|_{\dot{H}^{s,p}(\bbr^d)}:=\|(-\Delta)^{\frac
	{s}{2}}u\|_{L^p(\R^d)},
\end{equation*}
see e.g., \cite{mult_mazya}*{Chapter 3, page 70}. If $1<p<\frac{d}{s}$, then $\dot H^{s,p}(\R^d)$ can be identified with a space of functions having $C^{\infty}_c(\R^d)$ as a dense subspace, i.e., it is isomorphic to the \textit{Riesz potential space} often denoted by $I_{s}(L^p)$, see \cite{Samko}*{Chapter 5, Corollary 1} and  \cite{mazya}*{Lemma 1.8}. In particular, 
\begin{align}\label{hom_embedding}
\dot H^{s,p}(\R^d)\hookrightarrow L^{p^*}(\R^d),\quad  p^*:=\frac{dp}{d-sp}.
\end{align}
 On the other hand, it is well known that for $p\ge  \frac
{d}{s}$ the homogeneous fractional Sobolev spaces can not be identified as spaces of tempered distributions; cf. \cite{realization}*{Theorem 4}.
Similarly, if $1<p<\infty$ and $s>0$ we can define the inhomogeneous fractional Sobolev space as the completion of $C^{\infty}_c(\R^d)$ with respect to the norm
\begin{align*}
\|u\|_{H^{s,p}(\R^d)}:=\|{(1-\Delta)}^{\frac{s}{2}}u\|_{L^p(\R^d)},\quad {(1-\Delta)}^{\frac{s}{2}}u:=\mathscr{F}^{-1}\left({(1+|\cdot|}^{2})^{\frac{s}{2}}\mathscr{F}(u)\right).
\end{align*}
Such a space is equivalent to the \textit{Bessel potential space} often denoted by $G_s(L^p)$; see e.g.,  \cite{Samko}*{eq. (27.27)}. In particular, if $s\in \N$ then $H^{s,p}(\R^d)$ coincides with $W^{s,p}(\R^d)$, see \cite{Adams}*{Theorem 1.2.3}. Moreover, the following embeddings hold (\cite{mult_mazya}*{Theorem 3.1.3}):
\begin{align}\label{embeddings_in}
H^{s,p}(\R^d)\hookrightarrow \begin{cases} L^{q}(\R^d), & \mbox{if }1<p<\frac{d}{s}\ \text{and}\  p\le q\le p^*,\ \mbox{or }p=\frac{d}{s}\ \text{and}\  p\le q<\infty,  \\ L^{\infty}(\R^d), & \mbox{if }p>\frac{d}{s}.
\end{cases}
\end{align}
In what follows, we denote by $\dot H^{-s,p'}(\R^d)$ (respectively $H^{-s,p'}(\R^d)$) the dual space of $\dot H^{s,p}(\R^d)$ (respectively $H^{s,p}(\R^d)$) and $\langle \cdot, \cdot \rangle_{\dot{H}^{-s,p'}\!,\dot{H}^{s,p}}$ (respectively $\langle \cdot, \cdot \rangle_{{H}^{-s,p'}\!,{H}^{s,p}}$) the corresponding duality pairing. In the Hilbert setting ($p=2$), we for the sake of the reader, we drop the exponent $p$ (respectively $p'$) in the definition of $\dot H^{s,p}(\R^d)$ (respectively its dual).
 Note that, if $0<s<\frac{d}{2}$, Riesz representation theorem guarantees that for every $T\in \dot{H}^{-s}(\bbr^d)$  there exists a unique element ${U}_{T}\in \dot{H}^{s}(\bbr^d)$ such that 
\begin{equation}\label{weak_laplac}
	\langle T,\varphi \rangle_{\dot{H}^{-s}, \,\dot{H}^s}=\langle {U}_T, \varphi \rangle_{\dot{H}^{s}(\bbr^d)}\quad  \forall \varphi\in \dot{H}^{s}(\bbr^d).
\end{equation}
 Moreover, 
\begin{equation}\label{nomr_4}
	\left\|{U}_T\right\|_{\dot{H}^{s}(\bbr^d)}=\left\|T\right\|_{\dot{H}^{-s}(\bbr^d)},
\end{equation}
so that the duality \eqref{weak_laplac} is an isometry. 
The function ${U}_T$ satisfying \eqref{weak_laplac} is called a weak solution of the equation 
\begin{equation}\label{weak_10}
	(-\Delta)^{s}u=T \quad \text{in}\  \dot{H}^{-s}(\bbr^d).
\end{equation}
	\indent Assume now that $u\in C^{\infty}_c(\R^d)$. Then, for every $m\in \N$ and $s\in (0,m)$ one can define the operator	
	\begin{equation}\label{L}
		L_{m,s}u(x):=\frac{C_{d,m,s}}{2}\int_{\R^d}\frac{\delta_{m}u(x,y)}{|y|^{d+2s}}dy, \quad x\in \R^d,
	\end{equation}
	where $\delta_m u(x,y)$ is defined by 
	\begin{equation}\label{sigma}
		\delta_m u(x,y):=\sum_{k=-m}^{m}(-1)^{k}\binom{2m}{m-k}u(x+ky)\quad \forall x,y\in \R^d,
	\end{equation}
	and   $C_{d,m,s}$ is a suitable positive constant, cf. \cite{Abatangelo_2}*{eq. (1.2)}.
In particular, it is known that
	$L_{m,s}$ has $|\xi|^{2s}$ as a Fourier multiplier. Namely, for every $m\in \N$ and $s\in (0,m)$ it holds
	\begin{equation}\label{fourier_L}
		L_{m,s}\varphi=\mathscr{F}^{-1}\left({|\cdot|}^{2s}\mathscr{F}(\varphi)\right)=	(-\Delta)^{s}\varphi\quad  \forall \varphi\in C^{\infty}_c(\R^d),
	\end{equation}
	see \cite{Abatangelo_2}*{Theorem 1.9}.
	In particular, in view of \eqref{fourier_L},  if $u$ belongs to the space $\mathscr{L}^{1}_{2s}(\bbr^d)$ defined by \eqref{L_pesato}
	then $(-\Delta)^{s}u$ defines a distribution on every open set $\Omega\subset \R^d$ by 
	\begin{equation}\label{11_lap}
		\langle (-\Delta)^{s}u, \varphi \rangle :=\int_{\R^d}u(x)L_{m,s}\varphi(x) dx=\int_{\R^d}u(x)(-\Delta)^s\varphi(x)dx\quad \forall \varphi\in C^{\infty}_c(\Omega).
	\end{equation}
Thus, if $u\in \mathscr{L}^1_{2s}(\R^d)$ and $T$ is a distribution on $\Omega$, we write 
$$(-\Delta)^{s}u=T\quad \text{in}\ \mathscr{D}'(\Omega)$$
if 
$$\langle (-\Delta)^{s}u, \varphi \rangle= \langle T,\varphi\rangle\quad \forall \varphi\in C^{\infty}_{c}(\Omega),$$
where $\langle (-\Delta)^{s}u, \varphi \rangle$ is defined by \eqref{11_lap}.
	Moreover,   if $\Omega$ is an open subset of $\R^d$ and $u\in C^{2s+\ve}(\Omega)\cap \mathscr{L}^1_{2s}(\R^d)$ then $L_{m,s} u$ as well as many  well defined pointwisely in $\Omega$ and 
	\begin{equation*}
		\int_{\R^d} L_{m,s}u(x)\varphi(x)dx=\int_{\R^d}u(x)L_{m,s}\varphi(x)dx\quad \forall \varphi\in C^{\infty}_c(\Omega),
	\end{equation*}
see \cite{Abatangelo_2}*{Lemma 1.5} for the details.
If $0<s<1$, it is worth mentioning that the fractional Laplacian $(-\Delta)^s$  arises naturally as the infinitesimal generator of the standard rotation invariant,  $2s$-stable, $\R^d$-valued L\'evy process. See in particular \cite{BBK}*{Section 3} and \cite{BBK2}*{Section 2}.

 Finally, we recall that if $T\in \dot{H}^{-s}(\R^d)$ then the function $U_T$ defined by \eqref{weak_laplac} solves \eqref{weak_10} in the distributional sense. As a matter of fact, by the definition of the inner product in $\dot{H}^s(\R^d)$ we obtain that
	\begin{equation}\label{weak_distr}
		\langle {U}_T, \varphi \rangle_{\dot{H}^{s}(\bbr^d)}\!=\!\int_{\R^d}U_T(x)(-\Delta)^s \varphi(x)dx\quad \forall \varphi\in C^{\infty}_c(\R^d),
	\end{equation}
	where we used that $U_T\in \mathscr{L}^1_{2s}(\R^d)$ and the decay of $(-\Delta)^s\varphi$ (see e.g.,  \cite{mazya}*{Lemma 1.2} or \cite{Abatangelo_1}*{Lemma B.5}).

	\subsection*{Regular distributions}
	\noindent
	Let $d\ge 1$, $s>0$ and $1<p<\infty$. Let us define $X$ as 
	\begin{align}\label{X}
	{X}:= \dot H^{s,p}(\R^d)\ \big(\text{by further assuming}\ 1<p<\tfrac{d}{s}\big)\ \text{or}\ H^{s,p}(\R^d),
	\end{align}
and $X'$ as the dual of $X$. Then, we say that $T\in X{'}\cap L^{1}_{loc}(\R^d)$ if 
	\begin{equation}\label{rho_distribution}
		\langle T,\varphi\rangle=\int_{\bbr^d}T(x)\varphi(x)dx\quad \forall \varphi\in C^{\infty}_c(\bbr^d),
	\end{equation}
and there exists a positive constant $C$ independent of $T$ such that
	\begin{equation}\label{ext_linear}
		|\langle T,\varphi\rangle|\le C\left\|\varphi\right\|_{X} \quad \forall \varphi\in C^{\infty}_c(\bbr^d).
	\end{equation}
	From \eqref{ext_linear}, we infer that $T$ can be identified as the unique continuous extension with respect to the $X$--norm of the linear functional defined by \eqref{rho_distribution}. 
Although the above notion admits various extensions/modifications; see e.g., \cite{mazya}*{eq. \!\!(1.38)}, here, as a matter of simplicity, we gave a presentation in terms of the spaces employed in the paper.
	

\section{s-harmonic functions and Poisson's equation}\label{sec3}
We begin this section by proving interior regularity for $s$--harmonic functions on balls. This result is related to \cite{garofalo}*{Theorem 12.19}.  Nevertheless, we establish a quantitative version in the spirit of \cite{entire}*{Lemma 3.1} or \cite{orbital}*{Lemma 4.2}. 

Assume that  $0<s<1$. In \cite{entire}, the author employed a regularized version of the Poisson's  Kernel of the fractional Laplacian with respect to the ball  \cite{BBK}*{Lemma 3.11} to represent $s$-harmonic functions as a convolution with such a regularized Kernel; see \cite{entire}*{eq. (2.5)}. In Lemma \ref{lemma_1}, we develop a similar argument which is instead inspired by the one in \cite{orbital}*{Lemma 4.2}. For completeness, we further refer the reader to \cite{BBK2} for gradient estimates on $s$-harmonic functions; in that work it is also shown that the strong Liouville theorem extends to the fractional setting. See  \cite{BBK2}*{Lemma 3.2}.
	
	\begin{lemma}\label{lemma_1}
		Let $d\ge 1$,  $0<s<\frac{d}{2}$ and $R>0$. Assume that $u\in \mathscr{L}^{1}_{2s}(\R^d)$ solves 
		\begin{equation*}\label{eq_00}
			(-\Delta)^{s}u=0\quad \text{in}\ \mathscr{D}'(B_{2R}).
		\end{equation*}
		Then $u\in C^{\infty}(B_R)$. Moreover, for every multi-index $n\in \N^{d}$,
		$$\|D^{n}u\|_{L^{\infty}({B_R})}\le CR^{2s-|n|}\int_{\R^d}\frac{|u(x)|}{(R^{d+2s}+|x|^{d+2s})}dx$$
		for some positive constant $C$ independent of $R$.
	\end{lemma}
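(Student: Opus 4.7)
The plan is to reduce to the smooth case by mollification, derive the estimate for smooth $s$--harmonic functions via an exterior integral representation on $B_R$, and then pass to the limit. Let $\phi_\eps$ be a standard mollifier supported in $B_\eps(0)$, and set $u_\eps := u * \phi_\eps$. Since $u\in L^1_{\mathrm{loc}}(\R^d)\cap\mathscr{L}^1_{2s}(\R^d)$, the function $u_\eps$ is smooth and lies in $\mathscr{L}^1_{2s}(\R^d)$ with a bound uniform in $\eps$; because $(-\Delta)^s$ commutes with convolution (through \eqref{11_lap}), the hypothesis $(-\Delta)^s u=0$ in $\mathscr{D}'(B_{2R})$ transfers to $(-\Delta)^s u_\eps\equiv 0$ pointwise on $B_{2R-\eps}$ for every small $\eps>0$. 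Dominated convergence gives $u_\eps\to u$ both in $L^1_{\mathrm{loc}}$ and in the weighted norm $(1+|y|)^{-d-2s}\,dy$, so both sides of the target estimate converge along the family.

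Fix the intermediate radius $\rho := 3R/2$. For the smooth $s$--harmonic function $u_\eps$ on $B_\rho$, I would apply an exterior--value integral representation. In the low--order range $s\in(0,1)$ this is the classical Riesz--Landkof Poisson formula
$$u_\eps(x) = c_{d,s}\int_{|y|>\rho}\Big(\tfrac{\rho^2-|x|^2}{|y|^2-\rho^2}\Big)^{s}\frac{u_\eps(y)}{|x-y|^d}\,dy,\qquad x\in B_\rho,$$
whose kernel is $C^\infty$ in $x\in\overline{B_R}$ because $\rho^2-|x|^2\ge (5/4)R^2$ and $|x-y|\ge R/2$ for $|y|>\rho$. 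In the higher--order range $s\in[1,d/2)$ an analogous exterior representation is available either through the pseudodifferential regularity result \cite{garofalo}*{Theorem 12.19}, or by writing $s=k+\sigma$ with $k\in\N$, $\sigma\in(0,1)$ and combining the $\sigma$--Poisson formula for $(-\Delta)^k u_\eps$ with the classical Green representation for the integer power $(-\Delta)^k$. In every case the representing kernel $K_R(x,y)$ is smooth in $x\in\overline{B_R}$ and carries the same $R$--homogeneity.

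Differentiating under the integral and tracking the scaling $R\leftrightarrow 1$---each $\partial_x$ contributes a factor $R^{-1}$---then splitting the $y$--integration into the annulus $\rho<|y|<2\rho$ (where both $|y|^2-\rho^2$ and $|x-y|$ are of scale $R$, and the resulting negative power is angularly integrable) and the exterior $|y|\ge 2\rho$ (where $(|y|^2-\rho^2)^{-s}|x-y|^{-d}\lesssim |y|^{-d-2s}$) yields the uniform bound
$$|D^n u_\eps(x)|\le C\,R^{2s-|n|}\int_{\R^d}\frac{|u_\eps(y)|}{R^{d+2s}+|y|^{d+2s}}\,dy,\qquad x\in\overline{B_R},$$
with $C$ depending only on $d$, $s$, and $|n|$. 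Passing to $\eps\to 0$ in the weighted sense transfers the estimate to $u$, and Arzel\`a--Ascoli applied up to one extra derivative, together with $L^1_{\mathrm{loc}}$ convergence, upgrades $u$ to an element of $C^\infty(\overline{B_R})$.

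The chief technical difficulty is validating the exterior representation uniformly in $s\in(0,d/2)$: the Riesz--Landkof Poisson kernel is non--integrable near $|y|=\rho$ as soon as $s\ge 1$, so a genuine higher--order replacement (or the reduction $(-\Delta)^s=(-\Delta)^{s-\lfloor s\rfloor}(-\Delta)^{\lfloor s\rfloor}$ combined with \cite{garofalo}*{Theorem 12.19}) is required. Once the representation is in hand, the conversion of the kernel bounds into the precise weight $R^{d+2s}+|y|^{d+2s}$ is a matter of dimensional analysis and a careful case split in $y$.
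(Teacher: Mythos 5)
Your argument is essentially complete only in the range $0<s<1$, where the Riesz--Landkof exterior Poisson formula is available and your scaling analysis of its kernel does give the stated bound; the mollification step and the passage to the limit are also fine. The genuine gap is the case $1\le s<\tfrac{d}{2}$, which is the main point of the lemma, and neither of your two suggested substitutes closes it. Citing \cite{garofalo}*{Theorem 12.19} only yields qualitative interior smoothness; it contains no representation kernel and no $R$--scaling, so it cannot produce the quantitative estimate $\|D^nu\|_{L^\infty(\overline{B_R})}\le CR^{2s-|n|}\int_{\R^d}|u|\,(R^{d+2s}+|x|^{d+2s})^{-1}dx$, which is precisely what distinguishes this lemma from the pseudodifferential regularity result. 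The factorization $s=k+\sigma$ fails for a concrete reason: to apply the $\sigma$--Poisson formula to $v:=(-\Delta)^k u_\eps$ you need $v\in\mathscr{L}^1_{2\sigma}(\R^d)$, i.e.\ global integrability against the weight $(1+|x|)^{-d-2\sigma}$, but the hypothesis only gives $u\in\mathscr{L}^1_{2s}(\R^d)$ with $2s>2\sigma$; a function growing like $|x|^{2s-\delta}$ is admissible for $u$ yet makes the weighted integral of $v$ (and of $u$ itself against the weaker weight) infinite, so the exterior formula for $(-\Delta)^\sigma$ simply does not apply to $v$. Moreover, even granting it, recovering $u_\eps$ from $v$ through the Green representation of $(-\Delta)^k$ introduces boundary terms involving derivatives of $u_\eps$ on $\partial B_\rho$, which are exactly the quantities you are trying to estimate, so the argument becomes circular unless you add a separate interior-estimate iteration that you have not supplied.

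For comparison, the paper avoids any exterior Poisson or Green formula and works uniformly in $0<s<\tfrac{d}{2}$: it tests the equation against $(1-\eta_R)(I_{2s}*\varphi)$, with $\eta_R$ a cutoff vanishing on $B_{3R/2}$, and obtains the representation $u(z)=\int_{\R^d}J_R(x,z)u(x)\,dx$ on $B_R$, where $J_R(\cdot,z)=A_{2s}L_{m,s}\bigl(\eta_R(\cdot)\,|\cdot-z|^{-(d-2s)}\bigr)$ is built from the higher-order difference quotient operator $L_{m,s}$ with $m>s$. The bounds $|D^n_zJ_R(x,z)|\le CR^{2s-|n|}(R^{d+2s}+|x|^{d+2s})^{-1}$ are then proved directly, using that the Riesz kernel is $s$--harmonic away from its pole, and differentiation under the integral gives the estimate. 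If you want to salvage your route, you would have to construct and estimate a genuine higher-order exterior kernel yourself, which amounts to redoing this kind of argument; as written, the proposal leaves the $s\ge1$ case unproved.
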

	\begin{proof}
		Let $\eta\in C^{\infty}(\R)$ such that $\eta(x)=1$ if $|x|\ge 2$, $\eta(x)=0$ if $|x|<\frac{3}{2}$ and $0\le \eta(x)\le 1$ for every $x\in \R$. Let now $\varphi\in C^{\infty}_c(\R^d)$ be supported in $B_{R}$. Then, the function  $\psi:=I_{2s}*\varphi$ satisfies (see \cite{Stein}*{Chapter V, Lemma 2})
		\begin{equation}\label{eq_1}
			(-\Delta)^{s}\psi=\varphi\quad \text{in}\ \mathscr{D}'(\R^d). 
		\end{equation}
		In particular, $\psi\in C^{\infty}(\R^d)\cap \mathscr{L}^{1}_{2s}(\R^d)$.
		Now, let us fix $m\in \N$, $m>s$. Then, from \cite{Abatangelo_2}*{Lemma 1.5} we have that 
		\begin{equation}\label{eq_2}
			\int_{\R^d}\psi(x)L_{m,s}\chi(x)dx=\int_{\R^d}L_{m,s}\psi(x)\chi(x)dx\quad \forall \chi\in C^{\infty}_c(\R^d).
		\end{equation}
		In particular, by combinining \eqref{eq_1} with \eqref{eq_2} we obtain that 
		$L_{m,s}\psi(x)=\varphi(x)$ for every $x\in \R^d$.
		Next, let us consider the function $(1-\eta_R)\psi$ where $\eta_R(x):=\eta(|x|/R)$. Note that $(1-\eta_R)\psi\in C^{\infty}_c(\R^d)$ and it is supported in $B_{2R}$. Hence, we can  test \eqref{eq_00} against $(1-\eta_R)\psi$ obtaining 
		\begin{equation}\label{eq_3}
			\begin{split}
				0\!=\!& \int_{\R^d}u(x)L_{m,s}((1-\eta_R)\psi)(x)dx \!=\!\int_{\R^d}u(x)L_{m,s}\psi(x)dx \!-\!\!\int_{\R^d}u(x)L_{m,s} (\eta_R\psi)(x)dx\\
				&{=} \int_{B_{R}}u(x)\varphi(x)dx-\int_{\R^d}u(x)L_{m,s} (\eta_R \psi)(x)dx.
			\end{split}
		\end{equation}
	
		Then, by \eqref{L}, \eqref{sigma}, \eqref{eq_3} and  Fubini's theorem we obtain that there exists a positive constant $K_{d,m,s}$ such that
		\begin{equation}\label{u_repr_2}
			\begin{split}
				&K_{d,m,s}\int_{\R^d}u(x)L_{m,s} (\eta_R \psi)(x)dx\\
				&\!=\! \int_{B_R}\int_{\R^d}\int_{\R^d} \left(\!\frac{1}{|y|^{d+2s}}\left(\sum_{k=-m}^{m}(-1)^{k}\binom{2m}{m-k}\frac{\eta_R(x+ky)}{|x+ky-z|^{d-2s}}\!\!\right)dy\, u(x)dx\! \!\right)\!\varphi(z)dz\\
				&\!=K_{d,m,s}\int_{B_R}\left(\int_{\R^d}J_{R}(x,z)u( x)dx \right)\varphi(z)dz=K_{d,m,s}\int_{B_{R}}u(z)\varphi(z)dz,
			\end{split}
		\end{equation}
		where $K_{d,m,s}=(2/(A_{2s} C_{d,m,s}))$ and
		\begin{equation}\label{J_1}
			J_{R}(x,z):=\frac{A_{2s} C_{d,m,s}}{2}\int_{\R^d}\frac{1}{|y|^{d+2s}}\left(\sum_{k=-m}^{m}(-1)^{k}\binom{2m}{m-k}\frac{\eta_R(x+ky)}{|x+ky-z|^{d-2s}}\right)dy.
		\end{equation}
		In particular, from \eqref{u_repr_2} we deduce that for almost every $z\in B_R$
		\begin{equation}\label{u_repre}
			u(z)=\int_{\R^d}J_R(x,z)u(x)dx.
		\end{equation}
		From now on, we focus on proving estimates for \eqref{u_repre}.
		First of all we notice that the function $i_R(x,z)$ defined by 
		\begin{equation*}
			i_R(x,z):=\frac{\eta_R(x)}{|x-z|^{d-2s}}
		\end{equation*} 
		satisfies the following the properties:
		\begin{itemize}
			\item if $|z|<R$ and $|x|<\frac{3R}{2}$ then $i_R(x,z)=0$;
			\item if $|z|< R$ and  $|x|\ge \frac{3R}{2}$ then $|x-z|\ge \frac{R}{2}.$
		\end{itemize}
		In particular, $i_R(x,z)\in C^{\infty}(\R^d\times {B_R})$. 
		Moreover, in view of the above analysis, the function $i_R(\cdot, z)\in C^{\infty}({B_R})\cap \mathscr{L}^{1}_{2s}(\R^d)$ and $J_R(\cdot,z)=A_{2s} L_{m,s}(i_R(\cdot,z))$ from which \eqref{J_1} is well defined.
Furthermore, by differentiating, for every $n\in \N^{d}$ multi-index the following inequalities hold
\begin{align}
& |D^{n}_z i_R(x,z)|\le C(R+|x-z|)^{-(|n|+d-2s)}, \label{ineq_1}\\
& |D^{2m}_x D^{n}_{z}i_R(x,z)|\le CR^{-2m} (R+|x-z|)^{-(|n|+d-2s)},\label{ineq_2}
\end{align}
where $C$ is a positive constant not depending on $R$. For the convenience of reader we point out that, unless stated, all the constants from now on depend only on $d,m,s$ and $n$.
		By combining \eqref{ineq_1}, \eqref{ineq_2} with  \cite{Abatangelo_2}*{Lemma 2.4} we obtain that 
		\begin{equation}\label{diff_1}
			\begin{split}
				& \int_{\R^d}  \frac{1}{|y|^{d+2s}}  \left|\sum_{k=-m}^{m}(-1)^{k}\binom{2m}{m-k}  D^{n}_z i_R(x+ky,z)\right|dy\\
				&=\int_{B_R}\frac{1}{|y|^{d+2s}}\left|\sum_{k=-m}^{m}(-1)^{k}\binom{2m}{m-k}D^{n}_z i_R(x+ky,z)\right|dy\\
				&\hphantom{XXX} +\int_{B^c_R}\frac{1}{|y|^{d+2s}}\left|\sum_{k=-m}^{m}(-1)^{k}\binom{2m}{m-k}D^{n}_z i_R(x+ky,z)\right|dy\\
				&\!\!\le \!\left\|D^{2m}_x\! D^{n}_{z}i_R(\cdot ,z)\right\|_{L^{\infty}(\R^d)}\!\!\int_{B_R}\!\!|y|^{2m-(d+2s)}dy\!\\
				&\hphantom{XXX}+\!C_m\!\left\|D^{n}_z i_R(\cdot,z)\right\|_{L^{\infty}(\R^d)}\!\!\int_{B^c_R}\!\!|y|^{-(d+2s)}dy\\
				& \le C R^{-(|n|+d)},
			\end{split}
		\end{equation}
		for some positive constant $C$.
		Thus, \eqref{diff_1} implies that $J_R(x, \cdot )\in C^{\infty}({B_R})$ and 
		\begin{equation*}
			|D^{n}_z J_R(x,z)|\le  CR^{-(|n|+d)}\quad \forall (x,z)\in \R^d\times B_R.
		\end{equation*}
		Next, we further claim that for every $n\in \N^{d}$ multi-index there exists a positive constant $C$  such that 
\begin{equation}\label{diff_2}
|D^{n}_z J_R(x,z)|\le  \frac{C
}{R^{|n|-2s}(R^{d+2s}+|x|^{d+2s})}\quad \forall (x,z)\in \R^d\times B_R.
\end{equation}
To this aim, we first notice that if $|x|\le 4mR$, the inequality \eqref{diff_2} is  obtained from \eqref{diff_1}. Then  we only need to consider the case $|x|>4mR$.  
		
		Assume that $|x|>4mR$ and $|z|<R$. Since $I_{2s}$ is the fundamental solution for $(-\Delta)^{s}$ and  the function $x\mapsto \frac{A_{2s}}{|x-z|^{d-2s}}\in C^{\infty}(\R^d)\cap \mathscr{L}^{1}_{2s}(\R^d)$ provided $|x|>4mR$ and $|z|<R$, by applying again  \cite{Abatangelo_2}*{Lemma 1.5} we infer that for every $(x,z)\in B^c_{4mR}\times B_R$
		\begin{equation}\label{point_harmonic}
			\frac{A_{2s} C_{d,m,s}}{2}\int_{\R^d}\frac{1}{|y|^{d+2s}}\left(\sum_{k=-m}^{m}(-1)^{k}\binom{2m}{m-k}\frac{1}{|x+ky-z|^{d-2s}}\right)dy=0.
		\end{equation}
		Hence, by combining \eqref{J_1}, \eqref{point_harmonic} with the fact that $\eta_R(x)=1$ if $|x|>4mR$,  we deduce the equality
		\begin{equation}\label{split_3}
			\begin{split}
				&\!\frac{A_{2s} C_{d,m,s}}{2}\! \int_{\R^d}\!|\bar{y}-z|^{-(d-2s)}\! \underbrace{\left(\sum_{k=-m, k\neq 0}^{m}\!\!(-1)^{k}\binom{2m}{m-k} \frac{(\eta_R(\bar{y})-1)}{|x+k\bar{y}|^{(d+2s)}}\right)}_{:=h_R(x,\bar{y})} d\bar{y}\\
				&=\frac{A_{2s} C_{d,m,s}}{2}\int_{\R^d}\frac{1}{|y|^{d+2s}}\left(\,\sum_{k=-m}^{m}(-1)^{k}\binom{2m}{m-k}\frac{(\eta_R(x+ky)-1)}{|x+ky-z|^{d-2s}}\right)dy\\
				&=J_{R}(x,z).
			\end{split}
		\end{equation}
		Furthermore, for fixed $|x|>4mR$, we see that if $|\bar{y}|\ge 2R$ then $h_{R}(x,\bar{y})=0$ while if $|\bar{y}|<2R$ we still have $|x+k\bar{y}|>2mR$. We have therefore proved that, 
		$$h_R(x,\cdot )\in C^{\infty}_c(\R^d)\quad \forall |x|>4mR.$$
		Furthermore, by differentiating, we have the following estimate
		\begin{equation}\label{estimate_4}
			|D^n_{\bar{y}}h_{R}(x,\bar{y})|\le CR^{-|n|}|x|^{-(d+2s)} \quad \forall |x|>4mR,\ n\in \N^d,
		\end{equation}
		for some positive constant $C$. Hence, putting together \eqref{split_3} with \eqref{estimate_4} yields
		\begin{equation}\label{estimate_final}
			|D^{n}_z J_R(x,z)|\le \frac{C}{R^{|n|}|x|^{d+2s}}\int_{B_{2R}}\frac{d\bar{y}}{|\bar{y}-z|^{d-2s}}\le \frac{\tilde{C}}{R^{|n|-2s}|x|^{d+2s}}.
		\end{equation}
		In particular, from \eqref{estimate_final}, if $|x|>4mR$ we deduce \eqref{diff_2}. Finally, since $u\in \mathscr{L}^{1}_{2s}(\R^d)$,  from \eqref{u_repre} and \eqref{diff_2} we can differentiate under the sign of the integral obtaining that for every $n\in \N^{d}$ there exists $C>0$ such that
		\begin{equation*}
			|D^{n} u(z)|\le \frac{C
			}{R^{|n|-2s}}\int_{\R^d}\frac{|u(x)|}{R^{d+2s}+|x|^{d+2s}}dx\quad \forall |z|<R,
		\end{equation*}
		concluding the proof.
	\end{proof}
The following result  refines Lemma \ref{lemma_1} by further assuming some Lebesgue integrability.
	\begin{corollary}\label{l_p}
		Let $k\in \N$, $d\ge 1$, $0<s<\frac{d}{2}$ and $R>0$. Let $u=\sum^{k}_{i=1} u_i$, where $u_i\in L^{p_i}(\R^d)$ and $p_i\in [1,\infty)$. If $u$ solves 
		\begin{equation*}\label{eq_0}
			(-\Delta)^{s}u=0\quad \text{in}\ \mathscr{D}'(B_{2R}),
		\end{equation*}
		then $u\in C^{\infty}({B_R})$ and moreover, for every multi-index $n\in \N^{d}$ we have
		\begin{equation}\label{ineq8boh}
			\|D^{n}u\|_{L^{\infty}({B_R})}\le C\sum^{k}_{i=1}R^{-|n|-\frac{d}{p_i}}\|u\|_{L^{p_i}(\R^d)},
		\end{equation}
		for some positive constant $C$ indenpendent of $R$.
	\end{corollary}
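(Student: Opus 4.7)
The plan is to reduce to Lemma \ref{lemma_1} and then bound its right-hand integral via H\"older on each summand $u_i$. The statement as written has $\|u\|_{L^{p_i}}$ on the right, which I will treat as a typo for $\|u_i\|_{L^{p_i}}$, since each $u_i$ has its own exponent and $u$ itself need not belong to any single $L^{p_i}$.

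\textbf{Step 1: verify $u\in\mathscr{L}^{1}_{2s}(\R^d)$.} For each $i$, by H\"older (with the convention $p_1'=\infty$ when $p_1=1$),
\[
\int_{\R^d}\frac{|u_i(x)|}{(1+|x|)^{d+2s}}\,dx \le \|u_i\|_{L^{p_i}(\R^d)}\,\Bigl(\int_{\R^d}(1+|x|)^{-p_i'(d+2s)}\,dx\Bigr)^{1/p_i'},
\]
and the last integral is finite because $p_i'(d+2s)>d$ (using $p_i<\infty$ and $s>0$). Summing in $i$ puts $u\in\mathscr{L}^{1}_{2s}(\R^d)$, so Lemma \ref{lemma_1} applies and gives both the $C^\infty(\overline{B_R})$ conclusion and the pointwise estimate
\[
\|D^n u\|_{L^\infty(\overline{B_R})} \le C R^{2s-|n|}\int_{\R^d}\frac{|u(x)|}{R^{d+2s}+|x|^{d+2s}}\,dx.
\]

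\textbf{Step 2: split the integral and estimate term by term.} Using $|u|\le\sum_i |u_i|$ and H\"older on each piece,
\[
\int_{\R^d}\frac{|u_i(x)|}{R^{d+2s}+|x|^{d+2s}}\,dx \le \|u_i\|_{L^{p_i}(\R^d)}\,\Bigl(\int_{\R^d}\frac{dx}{(R^{d+2s}+|x|^{d+2s})^{p_i'}}\Bigr)^{1/p_i'},
\]
with the understanding that the $p_i=1$ case is handled separately via the trivial bound
\[
\int_{\R^d}\frac{|u_i(x)|}{R^{d+2s}+|x|^{d+2s}}\,dx \le R^{-(d+2s)}\|u_i\|_{L^1(\R^d)},
\]
which yields the correct exponent $R^{-|n|-d/p_i}=R^{-|n|-d}$ after multiplying by $R^{2s-|n|}$.

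\textbf{Step 3: scaling the reference integral.} For $p_i>1$ the change of variables $x=Ry$ gives
\[
\int_{\R^d}\frac{dx}{(R^{d+2s}+|x|^{d+2s})^{p_i'}} = R^{d-p_i'(d+2s)}\int_{\R^d}\frac{dy}{(1+|y|^{d+2s})^{p_i'}},
\]
and the last integral converges because $p_i'(d+2s)>d$. Taking the $1/p_i'$ power produces the factor $R^{-d/p_i-2s}$ (using $d/p_i'-(d+2s)=-d/p_i-2s$).

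\textbf{Step 4: assemble.} Multiplying by the prefactor $R^{2s-|n|}$ from Step 1 kills the $R^{2s}$ exactly, leaving $R^{-|n|-d/p_i}\|u_i\|_{L^{p_i}(\R^d)}$ for each $i$. Summing in $i$ gives \eqref{ineq8boh}. The only mild point is keeping track of the two exponent regimes $p_i=1$ and $p_i>1$ uniformly, which is routine, so there is no real obstacle beyond bookkeeping.
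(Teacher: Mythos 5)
Your proof is correct and follows essentially the same route as the paper: reduce to Lemma \ref{lemma_1}, then apply H\"older to the weight $(R^{d+2s}+|x|^{d+2s})^{-1}$ with a rescaling $x=Ry$ to extract the factor $R^{-d/p_i-2s}$. You add two details the paper treats as implicit --- the verification that $u\in\mathscr{L}^1_{2s}(\R^d)$ and the separate handling of $p_i=1$ (where $p_i'=\infty$) --- and you correctly note the typo $\|u\|_{L^{p_i}}$ should read $\|u_i\|_{L^{p_i}}$; these are sound refinements, not a different argument.
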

	\begin{proof}
		Clearly $u\in \mathscr{L}^{1}_{2s}(\R^d)$. Hence, Lemma \ref{lemma_1} yields
		\begin{equation}\label{stima_2}
			\|D^{n}u\|_{L^{\infty}(B_R)}\le CR^{2s-|n|}\int_{\R^d}\frac{|u(x)|}{(R^{d+2s}+|x|^{d+2s})}dx.
		\end{equation}
		Then, if we denote by $\omega_{d-1}$ the surface area of the unitary ball in $\R^d$ and $p'_i$ the H\"older conjugate of $p_i$, we obtain 
		\begin{equation}\label{stima_3}
			\begin{split}
				\int_{\R^d}\frac{dx}{(R^{d+2s}+|x|^{d+2s})^{p'_i}} =\omega_{d-1} R^{d-(d+2s)p'_i}\int_{0}^{\infty}\frac{s^{d-1}}{{(1+s^{d+2s})}^{p'_i}}ds.
			\end{split}
		\end{equation}
		Next, by combining \eqref{stima_2}, \eqref{stima_3} and H\"older inequality we derive \eqref{ineq8boh}.
	\end{proof}
	\begin{proof}[Proof of Theorem \ref{poli}]
	
	\noindent
		From Lemma \ref{lemma_1} we obtain that $u$ is smooth and
		$$\|D^{n}u\|_{L^{\infty}({B_R})}\le CR^{2s-|n|}\int_{\R^d}\frac{|u(x)|}{(R^{d+2s}+|x|^{d+2s})}dx\quad 	\forall n\in \N^{d},$$
		where $C$ is a constant not depending on $R$. By sending $R$ to infinity we obtain that $D^n u \equiv 0$ for every $n\in \N^d$ such that $|n|\ge 2s$.
	\end{proof}

 Next, we present a result in the spirit of \cite{mitidieri}*{Theorem 2.4} where the authors find representation formulae in the case of the polyharmonic operator $(-\Delta)^{k},\, k~{\in \N}$, as well as related Liouville theorems.

\begin{proof}[Proof of Theorem \ref{lemmamit}]
\noindent We only focus on $(i)\Rightarrow (ii)$, the other implication being trivial. By \cite{mazya}*{eq. (2.3)}  we have that $U_T=I_{2s}*T$. Moreover, by \eqref{weak_distr}, 
 \begin{align}\label{rz_solve}
 (-\Delta)^{s}(I_{2s}*T)=T\quad \text{in}\ \mathscr{D}'(\R^d).
 \end{align}
Hence, by Theorem \ref{poli} we infer that $u$  can be written as
 \begin{equation*}
 u(x)=(I_{2s}*T)(x)+P(x),\quad P(x)=\sum_{i=0}^{\text{deg(P)}}P_i(x),
 \end{equation*}
 where $P_i$ is a homogeneous polynomial of degree $i$ and by $\text{deg(P)}$ we denote the degree of $P$. 
In particular,  there exists a positive constant $C$ such that
\begin{equation}\label{low_poli}
\begin{split}
\frac{1}{R^d}\int_{R<|x-y|<2R} |P(y)|dy
 \ge CR^{\text{deg(P)}} +o(R^{\text{deg(P)}})\quad \text{as}\ R\to +\infty.
\end{split}
\end{equation}

Assume now by contradiction that $u$ satisfies \eqref{eq2228} and $P\not\equiv l$. Then, by \eqref{low_poli}  we conclude that $\text{deg(P)}\ge 1$ and there exists a positive constant $C$ such that 
\begin{equation}\label{518}
\begin{split}
  \frac{1}{R^d} \int_{R<|y-x|<2R} |(I_{2s}*T)(y)+P(y)-l|dy
  \ge CR^{\text{deg(P)}}+o(R^{\text{deg(P)}}),
\end{split}
\end{equation}
where we have used that $P\not\equiv l$ and the Sobolev embedding to conclude that
$$\lim_{R\to +\infty}\frac{1}{R^d}\int_{R<|y-x|<2R}(I_{2s}*T)(y)dy=0.$$
The inequality \eqref{518} contradicts \eqref{eq2228} completing the proof. 
\end{proof}
\vspace{1mm}

\remark\label{remarknew} We recall that Theorem \ref{lemmamit} admits several variations in the spirit of \cite{newbook}*{Corollary 4.9}. As a matter of fact, by arguing as in the first part of the proof of Theorem \ref{lemmamit}, for any $u\in \mathscr{L}^{1}_{2s}(\R^d)$ and $0\le  T\in \dot{H}^{-s}(\R^d)\cap L^{1}_{loc}(\R^d)$  satisfying  
\begin{equation}\label{distr5}
	(-\Delta)^{s}u=T\quad \text{in}\ \mathscr{D}'(\R^d), 
\end{equation}
we conclude that 
$u=I_{2s}*T+P $
for some polynomial $P$. In particular, by \eqref{nomr_4}
\begin{equation*}
	\|u-P\|_{\dot{H}^{s}(\R^d)}=\|T\|_{\dot{H}^{-s}(\R^d)}.
\end{equation*}
Moreover, in Theorem \ref{lemmamit}, the assumption $0\le T\in \dot H^{-s}(\R^d)\cap L^{1}_{loc}(\R^d)$ can be replaced for example by $T\in L^{q}(\R^d)$ with $1<q<\frac{d}{2s}$, since in this regime the Riesz potential of $T$ is well defined, it belongs to $L^{\frac{dq}{d-2sq}}(\R^d)$, and solves \eqref{rz_solve}; cf. \cite{Stein}*{Chapter V, Theorem 1} or \cite{mazya}*{Lemma 1.8}.
\vspace{1.5mm}

To conclude the section, we formulate another immediate corollary of Theorem {\ref{poli}} concerning the regularity of Riesz potential operator. 
Such outcome, as well as the ones proved in Section~{\ref{sec3}, can be applied for instance to the study of Thomas--Fermi type integral equations, see e.g., \cite{TF} and references therein.}
\begin{corollary}\label{riesz_sobolev} 
	Let $d\ge 1$, $0<s<\tfrac{d}{2}$ and $1< q<\frac{d}{2s}$. Assume that $T\in \dot{H}^{-s}(\R^d)\cap L^{q}(\R^d)$. Then $I_{2s}*T=U_T$, where $U_T$ is the potential defined by \eqref{weak_laplac}. In particular, ${I_{2s}*T}~\in \dot H^{s}(\R^d)\cap \dot H^{2s,q}(\R^d)$.
\end{corollary}
	\begin{proof}
		Since $T\in \dot{H}^{-s}(\R^d)\cap L^{q}(\R^d)$, by \eqref{weak_laplac} and \eqref{weak_distr} there exists an element ${U}_T\in \dot{H}^{s}(\R^d)$ such that 
		\begin{equation}\label{distrib_5}
			(-\Delta)^s U_T=T\quad \text{in}\ \mathscr{D}'(\R^d).
		\end{equation}
		On the other hand, since $1<q<\frac{d}{2s}$, the Riesz potential $I_{2s}*T$  is well defined as a Lebesgue integral and solves \eqref{distrib_5} as well. Note that, by the Sobolev embedding \eqref{hom_embedding} and  \cite{mazya}*{Lemma 1.8},  ${U}_T\in L^{2^*}(\R^d)$ and $I_{2s}*T\in \dot H^{2s,q}(\R^d)\hookrightarrow L^{\frac{dq}{d-2sq}}(\R^d) $. Hence, ${U}_T-I_{2s}*T \in \mathscr{L}^{1}_s(\R^d)$ is $s$--harmonic.
		Then, again in view of \eqref{hom_embedding}, we can  apply Corollary~{\ref{l_p}} to $u:=U_T-I_{2s}*T$ and, by sending $R$ to infinity, we derive the equality
		${U}_T=I_{2s}*T$. 
	\end{proof}
\vspace{-1.5mm}
\remark \label{RZ_ext} Let us fix any $k\in \N$, $1<q<\frac{d}{2s}$ and $1\le p_i<\infty$ for $i=1,\dots, k$.  If we only assume  that $T\in  L^{q}(\R^d)$ and  $u=\sum_{i=1}^{k}u_i$ (with $u_i\in L^{p_i}(\R^d)$) satisfy \eqref{distr5},  by arguing as in the proof of Corollary \ref{riesz_sobolev}, we conclude that $u=I_{2s}*T$.
	
	\section{Br{e}zis--Browder type results}\label{sec4}
In this section we first focus on proving validity of Brezis--Browder type results for fractional Sobolev spaces defined as in \eqref{X}, under extra (local)integrabilty conditions. Next, as a matter of completeness, in the low order regime $0<s<1$, we show that the same holds true without further integrability in the framework of the fractional Sobolev spaces defined via the Gagliardo--Slobodecki\u{\i} seminorm. This provides an improvement to Theorem \ref{integr_alpha_big} for $0<s<1$ and $p=2$; see Theorem \ref{prod_integr} and Remark \ref{rm_last}. 

To begin with, we recall the following technical result:
\begin{proposition}\label{converging_H}
Let $d\ge 1$,  $s>0$, $1<p<\infty$ and $X$ be as in \eqref{X}. Assume that $u\in X$ and $\varphi\in C^{\infty}_c(\bbr^d)$. If we define $\varphi_\lambda(x):=\varphi(\lambda^{-1}x)$ with $\lambda\ge 1$, then $u\varphi_\lambda\in {H}^{s,p}(\bbr^d)$. Moreover, if $\varphi=1$ in a neighborhood of the origin then
\begin{align*}
\|u \varphi_\lambda-u\|_{X}\to 0\quad \text{as}\ \lambda\to \infty.
\end{align*}
\end{proposition}
\begin{proof}
Assume first that $X=\dot H^{s,p}(\R^d)$. Then, from \cite{mult_mazya}*{Chapter 3, Section 3.7} we conclude that $u\varphi_{\lambda}\in \dot H^{s,p}(\R^d)$ and
\begin{align*}
\|u\varphi_\lambda\|_{\dot H^{s,p}(\R^d)}\le C_{d,s,\varphi}\|u\|_{\dot H^{s,p}(\R^d)},
\end{align*}
for some positive constant $C_{d,s,\varphi}$ (independent of $\lambda$).
Moreover, by the Sobolev embedding \eqref{hom_embedding} we further have that $u\varphi_{\lambda}\in H^{s,p}(\R^d)$. Next, by arguing as in \cite{multiplier}*{Lemma 5} we infer that $u\varphi_{\lambda}$ converges to $u$ in $\dot H^{s,p}(\R^d)$. 

Assume now that $X=H^{s,p}(\R^d)$. If $s\in \N$, by performing an explicit computation it is easy to verify that
\begin{align}\label{integer_case}
\|u \varphi_\lambda\|_{H^{s,p}(\R^d)}\le C_{d,s,\varphi}\|u\|_{H^{s,p}(\R^d)}. 
\end{align}
If  $s\notin \N$, then \eqref{integer_case} follows by interpolation with the integer cases. As a result, by arguing again as in \cite{multiplier}*{Lemma 5} we conclude that $u\varphi_{\lambda}$ converges to $u$ in $H^{s,p}(\R^d)$.
\end{proof}
An intermediate step toward Theorem \ref{integr_alpha_big} follows at once. Indeed, let us focus on $s\notin \N$. If $X$ and $\bar{q}$ are as in \eqref{X} and \eqref{q_}, 
and $T\in X{'}\cap L^{\bar{q}}_{loc}(\R^d)$,  by a density argument and the Sobolev embeddings \eqref{hom_embedding}-\eqref{embeddings_in}, it is easy to verify that
\begin{align}
\langle T, u \rangle_{ X{'}, \, X}=\int_{\R^d}T(x)u(x)dx\quad \forall u\in X_c,
\label{lemma_q}
\end{align}
where $X_{c}$ denotes the set of $X$-functions with compact support.  Hence, in view of Proposition \ref{converging_H} and \eqref{lemma_q}, we are now ready to state our main result of this section: 
	\begin{theorem}\label{integr_alpha_big}
Let $d\ge 1$,  $s>0$, $1<p<\infty$, $X$  be as in \eqref{X} and $\bar{q}$ as in \eqref{q_}. Assume that $T\in X{'}\cap L_{loc}^{\bar{q}}(\bbr^d)$ and $u\in X$ is such that $Tu\ge -|f|$  for some  $f\in L^{1}(\bbr^d)$. Then, $Tu\in L^{1}(\bbr^d)$ and 
		$$\langle T,u \rangle_{X^{'}\!,\,X}=\int_{\bbr^d}T(x)u(x)dx.$$
	\end{theorem}
	\begin{proof}
	Assume first that $s\notin \N$. Let us consider  $\varphi\in C^{\infty}_c(\bbr^d)$, $0\le \varphi\le 1$, and $\varphi=1$ in $B_1$. Let us also set $u_\lambda(x):=u(x)\varphi(\lambda^{-1}x)$ with $\lambda\ge 1$. 
		In particular, $u_{\lambda}\in X_c$. Thus, Proposition \ref{converging_H} and \eqref{lemma_q}  yield
		\begin{equation}
		\label{9boh}
			\lim_{\lambda\to \infty}\langle T,u_\lambda\rangle_{X^{'},\, X} =\lim_{\lambda\to \infty}\int_{\bbr^d}T(x)u_\lambda(x)dx= \langle T,u\rangle_{X^{'},\, X}.
		\end{equation}
		Therefore,  by the inequality $Tu\ge -|f|\in L^{1}(\bbr^d)$,  Fatou's Lemma and \eqref{9boh}, we have
		\begin{equation}\label{fatou_1}
			\begin{split}
				\int_{\bbr^d}T(x)u(x)dx
				\le \liminf_{\lambda\to +\infty}\int_{\bbr^d}T(x)u_\lambda(x)dx
				=\langle T,u\rangle_{X^{'},\, X}.
			\end{split}
		\end{equation}
		By \eqref{fatou_1} and using again  the inequality $Tu\ge -|f|\in L^{1}(\bbr^d)$ we also get $Tu\in L^{1}(\bbr^d)$. Finally, since $|Tu_\lambda|\le |Tu|\in L^{1}(\bbr^d)$, by dominated convergence we deduce that
		$Tu_\lambda$ converges to $Tu$ in $L^{1}(\bbr^d)$, concluding the proof in the non-integer setting.
		
Assume that $s\in \N$. If $X=H^{s,p}(\R^d)$ we directly refer to \cite{Adams}*{Theorem 3.4.2}. If $X=\dot H^{s,p}(\R^d)$ and $1<p<\frac{d}{s}$ we simply argue as follows. By the trivial embedding $H^{s,p}(\R^d)\hookrightarrow  \dot H^{s,p}(\R^d)$, $Tu_{\lambda}\ge -|f|$, the fact that $u_\lambda\in H^{s,p}(\R^d)$ and \cite{Adams}*{Theorem 3.4.2}, we infer 
\begin{align}
\langle T,u_\lambda\rangle_{\dot{H}^{-s,p'},\, \dot{H}^{s,p}}=\langle T,u_\lambda\rangle_{{H}^{-s,p'},\, {H}^{s,p}}=\int_{\R^d}T(x)u_{\lambda}(x)dx.
\label{int_eq}
\end{align}
Then, by  \eqref{int_eq} and arguing as above we conclude the proof. 
	\end{proof}

	\subsection*{Low order regime: $0<s<1$.}
	
Up to this point, our analysis has focused on fractional Sobolev spaces characterized via the Riesz and Bessel potentials. By replacing the former spaces with those defined through the Gagliardo--Slobodecki\u{\i}  seminorm,  in Theorem \ref{prod_integr} we show that, for $0<s<1$, an analogue of Theorem \ref{integr_alpha_big} holds in such a different framework, with no further local integrability assumptions required. Namely,
for $d\ge 1$, $0<s<1$ and $1<p<\infty$,  we can define $\dot W^{s,p}(\R^d)$ and $W^{s,p}(\R^d)$  to be the completion of $C^{\infty}_c(\R^d)$ with respect to 
\begin{align*}
[u]_{\dot W^{s,p}(\R^d)}:=\bigg(\int_{\R^d}\int_{\R^d}\frac{|u(x)-u(y)|^p}{|x-y|^{d+sp}}dx\,dy\bigg)^{\frac{1}{p}},
\end{align*}
respectively to
\begin{align}\label{homPbis}
\|u\|_{W^{s,p}(\R^d)}:=\|u\|_{L^p(\R^d)}+[u]_{\dot W^{s,p}(\R^d)}.
\end{align}
In particular, $ W^{s,p}(\R^d)$ (respectively $\dot W^{s,p}(\R^d)$ by further assuming $1<p<\frac{d}{s}$) can be identified as the space of functions $u\in L^{p}(\R^d)$ (respectively $L^{p^*}(\R^d)$) such that $[u]_{\dot W^{s,p}(\R^d)}$ is finite.  
Moreover, similarly to \eqref{X}, we then define $Y$ to be 
\begin{align}\label{X_tilde}
{Y}:= \dot W^{s,p}(\R^d)\ \big(\text{by further assuming}\ 1<p<\tfrac{d}{s}\big)\ \text{or}\ W^{s,p}(\R^d),
	\end{align}
and we understand $Y'\cap L^{1}_{loc}(\R^d)$ in analogy with \eqref{rho_distribution}-\eqref{ext_linear}. 
It is well known that this construction gives rise, in general,  to a space distinct from $X$ in \eqref{X}; see, for instance, \cite{interpolation}*{Theorem 3.26} for a comprehensive treatment of the inhomogeneous case, and \cite{characterisation} together with the references therein for the homogeneous counterpart.
The proof of Theorem \ref{prod_integr} is now based on Proposition \ref{one_sided_approx} where, unlike Proposition \ref{converging_H}, we provide a bounded approximating sequence for elements in $Y$.

In what follows, we define $[u]_{+}:=\max\{u,0\}$ and $[u]_{-}:=|u|-[u]_{+}$.
	\begin{proposition}\label{one_sided_approx}
		Let $d\ge 1$,  $0<s<1$, $1<p<\infty$ and $Y$ be as in \eqref{X_tilde}. Let  $u \in Y$ be a non-negative function.  Then, there exists a sequence $(u_n)_n \subset Y\cap L^{\infty}_{c}(\bbr^d)$ of non-negative functions such that
		\begin{itemize}
			\item[(i)]$0\le u_n(x)\le u(x)$;
			\vspace{1mm}
			\item[(ii)]$u_n\to u$ in $Y$ as $n\to \infty$.
		\end{itemize}
	\end{proposition}
	\begin{proof} If $Y=\dot W^{s,p}(\R^d)$ and $1<p<\frac{d}{s}$,  by \cite{characterisation}*{Lemmas A.1, B.1}  there exists a  non-negative sequence  $(\varphi_n)_n\subset C^{\infty}_{c}(\bbr^d)$ converging to $u$ in $\dot{W}^{s,p}(\bbr^d)$. Similarly, if $Y=W^{s,p}(\R^d)$, thanks to the $L^p$-integrability, \cite{characterisation}*{Lemma B.1} easily adapts to the cases $p\ge \frac{d}{s}$ as well. As a result, for any $0\le u\in Y$ we can find a non-negative sequence  $(\varphi_n)_n\subset C^{\infty}_{c}(\bbr^d)$ converging to $u$ in $Y$. Next, we define \begin{equation*}\label{def_min}
			u_n:=\min\left\{\varphi_n,u\right\}
		=u-{[\varphi_n-u]}_{-}.
		\end{equation*} 
		By construction, $0\le u_n\le u$.
		Moreover, from $\big[[\, \cdot\, ]_{\pm}\big]_{\dot {W}^{s,p}(\bbr^d)}\le [\, \cdot\, ]_{\dot {W}^{s,p}(\bbr^d)}$ (cf.  \cite{Musina}*{page $3$} for the case $p=2$, the case $p\neq 2$ following by the same argument) and \eqref{homPbis},  we conclude that $u_n\in Y\cap L^{\infty}_c(\R^d)$ and  converges to $u$ in $Y$.
	\end{proof}
	\begin{theorem}\label{prod_integr}
		Let $d\ge 1$, $0<s<1$, $1<p<\infty$ and $Y$ be as in \eqref{X_tilde}. Let  $T\in {Y}'\cap L^{1}_{loc}(\bbr^d)$ and  $u\in Y$. If $Tu\ge -|f|$ for some $f\in L^{1}(\R^d)$ then $Tu\in L^{1}(\R^d)$ and
		\begin{equation*}
			\langle T, u\rangle_{Y',\, Y}=\int_{\bbr^d}T(x)u(x)dx.
		\end{equation*}
	\end{theorem}
\begin{proof}
		Let $u\in Y$. Then, $[u]_{\pm}\in Y$ and, by Proposition \ref{one_sided_approx} there exist two sequences ${(u^{\pm}_n)}_{n}\subset Y\cap L^{\infty}_c(\R^d)$ converging to $[u]_{\pm}$ in $Y$ such that $0\le u^{\pm}_n(x)\le [u(x)]_{\pm}$ a.e. Then, the function $\bar{u}_n:=u^{+}_n-u^{-}_{n}$ converges to $u$ in $Y$. Furthermore, by a direct computation we have that 
		$|\bar{u}_n(x)|\le |u(x)|$ and $\bar{u}_n(x)u(x)\ge 0$. Next, by approximating $\bar{u}_n\in Y\cap L^{\infty}_c(\R^d)$ via convolution with mollifiers $(\rho_k)_k$ (see again \cite{characterisation}*{Lemma A.1}), we notice that for every fixed $n$
		\begin{equation*}
			\begin{split}
				\langle T, \bar{u}_n\rangle_{Y',\, Y} 
				&=\lim_{k\to +\infty}\int_{\R^d}T(x)(\bar{u}_n*\rho_k)(x)dx\\
				&= \lim_{k\to +\infty}\int_{\text{supp}(\bar{u}_n)}(T*\rho_k)(x)\bar{u}_n(x)dx =\int_{\R^d}T(x)\bar{u}_n(x)dx.	
			\end{split}
		\end{equation*}
		Then, by arguing as in the proof of Theorem \ref{integr_alpha_big} we infer that $Tu\in L^{1}(\R^d)$ and 
		\begin{equation*}\label{eq10_boh}
			\begin{split}
				\langle T, u\rangle_{Y',\, Y}=\!\lim_{n\to +\infty}\langle T, \bar{u}_n \rangle_{Y',\,Y}\!
				=\! \lim_{n\to +\infty}\int_{\R^d}T(x)\bar{u}_n(x)dx\!=\!\int_{\R^d}T(x)u(x)dx,
			\end{split}
		\end{equation*}
	concluding the proof.
	\end{proof}
\remark \label{rm_last}Because of the equivalence between $Y$ and $X$ if $p=2$ and $0<s<1$ (see e.g., \cite{classicbook}*{Propositions 1.37, 1.59}), Theorem \ref{prod_integr} provides a sharper result than Theorem \ref{integr_alpha_big} in this range of the parameters.
\vspace{1mm}

\noindent
\section*{Acknowledgements.} The author was supported by the European Research Council
(grant no. 864138 ``SingStoch\\
DispDyn''). The author was also funded by the EPSRC Maths DTP
2020 Swansea University, UK (EP/V519996/1). The author would like to thank an anonymous referee for the helpful
comments which have improved the presentation of the paper.

\bibliographystyle{amsplain}

\end{document}